\newcommand{\HH}{{\operatorname H}}
\newcommand{\im}{\operatorname{im}}
\newcommand{\fm}{{\mathfrak{m}}}
\newcommand{\fp}{{\mathfrak{p}}}
\newcommand{\xra}{\xrightarrow}
\theoremstyle{plain}
\newtheorem{theorem}{Theorem}[section]
\newtheorem{proposition}[theorem]{Proposition}
\newtheorem{lemma}[theorem]{Lemma}
\newtheorem{corollary}[theorem]{Corollary}
\theoremstyle{definition}
\newtheorem{definition}[theorem]{Definition}
\newtheorem{example}[theorem]{Example}
\theoremstyle{remark}
\newtheorem{remark}[theorem]{Remark}
\newtheorem{notation}[theorem]{Notation}
\newtheoremstyle{myplain}
     {1em plus 0.15em minus 0.05em}
     {1em plus 0.15em minus 0.05em}
     {\itshape}
     {}
     {\bf}
     {.}
     {0.5em}
     {}
\newtheoremstyle{mydef}
     {1em plus 0.15em minus 0.05em}
     {1em plus 0.15em minus 0.05em}
     {}
     {}
     {\bf}
     {.}
     {0.5em}
     {}
\newtheoremstyle{myrmk}
     {1em plus 0.15em minus 0.05em}
     {1em plus 0.15em minus 0.05em}
     {}
     {}
     {\itshape}
     {.}
     {0.5em}
     {}
\theoremstyle{myplain}
\theoremstyle{myrmk}
\theoremstyle{mydef}
\newtheorem{para}[theorem]{}
\numberwithin{equation}{theorem}
\newcommand{\und}[1]{#1^{\natural}}
\newcommand{\dgspec}[1][A]{\operatorname{DGSpec}(#1)}
\newcommand{\spec}[1][R]{\operatorname{Spec}(#1)}
\newcommand{\dgdim}[1][A]{\operatorname{DGdim}(#1)}
\newcommand{\dgcontr}[2]{#1^{#2}}
\newcommand{\supp}[2][R]{\operatorname{Supp}_{#1}(#2)}
\newcommand{\anc}[2][R]{\operatorname{Anc}_{#1}(#2)}
\newcommand{\ldim}[2][R]{\operatorname{ldim}_{#1}(#2)}
\newcommand{\Lotimes}[3][R]{#2\otimes_{#1}#3}
\newcommand{\x}{\mathbf{x}}
\begin{document}

\bibliographystyle{amsplain}

\author{Kristen A. Beck}

\address{Kristen A. Beck, Department of Mathematics,
The University of Arizona,
617 N. Santa Rita Ave.,
P.O. Box 210089,
Tucson, AZ 85721,
USA}

\email{kbeck@math.arizona.edu}

\urladdr{http://math.arizona.edu/\~{}kbeck/}

\author{Sean Sather-Wagstaff}

\address{Sean Sather-Wagstaff,
Department of Mathematics,
NDSU Dept \# 2750,
PO Box 6050,
Fargo, ND 58108-6050
USA}

\email{sean.sather-wagstaff@ndsu.edu}

\urladdr{http://www.ndsu.edu/pubweb/\~{}ssatherw/}


\title[Krull Dimension  for DG Algebras]
{Krull Dimension for Differential Graded Algebras}



\keywords{DG algebras, DG modules, Krull dimension, systems of parameters}
\subjclass[2010]{Primary: 13C15, 13D02; Secondary: 13B30}

\begin{abstract}
We introduce a naive notion of a system of parameters for a homologically finite complex
over a commutative noetherian local ring, and compare it to the system of parameters defined
by Christensen. We show that these notions differ in general, but that they agree when the 
complex in question is a DG $R$-algebra. In this case we also show that the Krull dimension defined in terms of
the lengths of such systems of parameters agrees with Krull dimensions defined in terms of certain
chains of prime ideals.
\end{abstract}

\maketitle


\section{Introduction} \label{sec111025a}

In this paper, $R$ is a commutative noetherian ring with identity.
The term ``$R$-complex'' is short for ``chain complex of (unital) $R$-modules''  indexed homologically.
The \emph{infimum} of an $R$-complex $X$ is
$\inf(X):=\inf\{i\in\mathbb Z\mid\HH_i(X)\neq 0\}$,
and $X$ is \emph{homologically finite} if the total homology module $\coprod_{i\in\mathbb Z}\HH_i(X)$ is finitely generated.
The Koszul complex over $R$ on a sequence $\x=x_1,\ldots,x_n\in R$ is denoted $K^R(\x)$.

Foxby~\cite{foxby:bcfm} 
defines the \emph{Krull dimension} of an $R$-complex $X$ as
$$\dim_R(X):=\sup\{\dim(R/\fp)-\inf(X_{\fp})\mid\fp\in\supp X\}$$
where $\supp X:=\cup_{i\in\mathbb Z}\supp{\HH_i(X)}$.
If $M$ is a finitely generated $R$-module, then $\dim_R(M)$ is the usual Krull dimension of $M$,
given in terms of chains of prime ideals in $\supp M$.
If $\inf(X)>-\infty$, then $\dim_R(X)\geq-\inf(X)$.

When $R$ is local, it is natural to seek a notion of a systems of parameters for  homologically finite $R$-complexes.
One such notion comes from Christensen~\cite{christensen:sc2}, starting with the following version of minimal prime ideals for complexes.
Let $X$ be an $R$-complex such that $\inf(X)>-\infty$. A prime ideal $\fp\in\spec$ is an \emph{anchor prime} for $X$ 
if $\dim_{R_{\fp}}(X_{\fp})=-\inf(X_{\fp})$.
Let $\anc X$ denote the set of anchor primes for $X$.
Assuming that $(R,\fm)$ is local and $X$ is homologically finite, 
a \emph{system of parameters} for  $X$ is a sequence $\mathbf x=x_1,\ldots,x_d\in\fm$ 
such that $\fm\in\anc{\Lotimes{K^R(\x)}{X}}$ and $d=\dim_R(X)+\inf(X)$.
Christensen~\cite[Theorem 2.9]{christensen:sc2} shows that $X$
has a system of parameters in this setting.

The point of this paper is to explore the following different (possibly more naive) versions of these notions.

\begin{definition}\label{defn121025d}
Assume that $(R,\fm)$ is local, and let $X$ be a homologically finite $R$-complex. 
A \emph{length sequence} for $X$ is a sequence $\x=x_1,\ldots,x_d\in\fm$
such that each $\HH_i(\Lotimes{K^R(\x)}{X})$ has finite length.
If $m$ is the length of the shortest  length sequence for $X$,
then the \emph{length dimension} of $X$ is 
$$\ldim X:=m-\inf(X).$$
A \emph{length system of parameters} for $X$ is a
length sequence $x_1,\ldots,x_m\in\fm$ for $X$ such that $m=\ldim X+\inf(X)$.
\end{definition}

\begin{remark}\label{rmk121025c}
Assume that $(R,\fm)$ is local, and let $X$ be a homologically finite $R$-complex. 
Any generating sequence for an $\fm$-primary ideal of $R$ is a  length sequence for $X$,
so $X$ admits a length system of parameters, and
$$\dim(R)\geq\ldim X+\inf(X).$$
We have $\ldim X\geq-\inf(X)$, with equality holding if and only if 
each $\HH_i(X)$ has finite length.
\end{remark}

Lemma~\ref{lem121025b} shows that $\ldim X\geq\dim_R(X)$.
It is straightforward to show that one can have strict inequality here;
see Example~\ref{ex121025b}.
On the other hand, the main result of this paper shows that this cannot occur when $X$ is a DG $R$-algebra.
It is stated next and proved in~\ref{proof121026a}.
See Section~\ref{sec111025c} for background on DG algebras.

\begin{theorem}\label{thm121026a}
Let $A$ be a homologically finite positively graded commutative local noetherian DG $A_0$-algebra such that $(A_0,\fm_0)$ is local noetherian.
\begin{enumerate}[\rm(a)]
\item \label{thm121026a1}
Given a sequence $\x\in\fm_0$, the following conditions are equivalent:
\begin{enumerate}[\rm(i)]
\item $\x$ is a system of parameters for $A$;
\item $\x$ is a system of parameters for $\HH_0(A)$; and
\item $\x$ is a length system of parameters for $A$.
\end{enumerate}
\item \label{thm121026a2}
One has $\ldim[A_0]A=\dim_{A_0}(A)=\dim(\HH_0(A))$.
\item \label{thm121026a3}
If $A$ is generated over $A_0$ in odd degrees or if $A$ is bounded, then 
$\dgdim=\ldim[A_0]A=\dim_{A_0}(A)=\dim(\HH_0(A))$.
\end{enumerate}
\end{theorem}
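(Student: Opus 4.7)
The overall strategy is to exploit the fact that, for a positively graded DG algebra $A$, the ring $\HH_0(A)$ is a quotient of $A_0$ and each $\HH_i(A)$ carries a natural $\HH_0(A)$-module structure. This will reduce each of the dimension quantities in play to the ordinary Krull dimension of the noetherian local ring $\HH_0(A)$.

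First I would record the observation that $\supp[A_0]{\HH_i(A)} \subseteq \supp[A_0]{\HH_0(A)}$ for every $i$, hence $\supp[A_0]{A} = \supp[A_0]{\HH_0(A)}$; for $\fp$ in this common support, $\HH_0(A)_\fp \neq 0$ forces $\inf(A_\fp) = 0$, so Foxby's definition collapses to $\dim_{A_0}(A) = \sup\{\dim(A_0/\fp) \mid \fp \in \supp[A_0]{\HH_0(A)}\} = \dim(\HH_0(A))$. Next, for any sequence $\x = x_1,\ldots,x_d \in \fm_0$, the Koszul construction $B := K^{A_0}(\x) \otimes_{A_0} A$ is itself a homologically finite positively graded commutative DG $A_0$-algebra with $\HH_0(B) = \HH_0(A)/(\x)\HH_0(A)$; by the same observation applied to $B$, every $\HH_i(B)$ is a finitely generated $\HH_0(B)$-module. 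Hence $\x$ is a length sequence for $A$ if and only if $\HH_0(A)/(\x)\HH_0(A)$ has finite length, equivalently $(\x)\HH_0(A)$ is primary to the maximal ideal of $\HH_0(A)$; the minimum such $d$ is $\dim(\HH_0(A))$, so $\ldim[A_0]A = \dim(\HH_0(A))$, completing part (b).

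For part (a), the equivalence (ii)$\Leftrightarrow$(iii) is now immediate, since both conditions say that $(\x)\HH_0(A)$ is primary to the maximal ideal and $d = \dim(\HH_0(A))$. For (i)$\Leftrightarrow$(ii), the anchor condition $\fm_0 \in \anc[A_0]{B}$ reads $\dim_{A_0}(B) = -\inf(B)$; by the formula from the preceding paragraph applied to $B$, the left side equals $\dim(\HH_0(A)/(\x)\HH_0(A))$, while the right side is $0$ whenever $\HH_0(B) \neq 0$. Hence the anchor condition is again equivalent to $\HH_0(A)/(\x)\HH_0(A)$ having finite length, and the cardinality condition $d = \dim_{A_0}(A) + \inf(A) = \dim(\HH_0(A))$ matches as well.

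For part (c) we must additionally bring in $\dgdim$, the Krull dimension defined via chains of DG prime ideals. The natural plan is: when $A$ is generated over $A_0$ in odd degrees, graded commutativity forces the positive-degree generators to square to zero and hence lie in every DG prime, so DG primes of $A$ correspond bijectively to primes of $\HH_0(A)$; in the bounded case an analogous truncation or filtration argument should produce the same identification. I expect this last comparison to be the main obstacle, as it depends on the precise notion of DG prime developed in earlier sections; the rest of the theorem is a largely formal consequence of the $\HH_0(A)$-module structure on the homology of $A$.
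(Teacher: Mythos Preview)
Your proposal is correct and follows essentially the same approach as the paper: both arguments hinge on the observation that each $\HH_i(A)$ (and each $\HH_i(K^{A_0}(\x)\otimes_{A_0}A)$) is a finitely generated $\HH_0$-module, which collapses all the supports and infima to those of $\HH_0(A)$, and your sketch of part~(c) is exactly Proposition~\ref{prop111025a}. The only organizational difference is that the paper runs a cycle of inequalities $\dim_{A_0}(A)\geq\dim(\HH_0(A))\geq\ldim[A_0]A\geq\dim_{A_0}(A)$ (invoking the general Lemma~\ref{lem121025b} for the last step), whereas you compute $\dim_{A_0}(A)=\dim(\HH_0(A))$ and $\ldim[A_0]A=\dim(\HH_0(A))$ directly from the support identity; this is a minor streamlining, not a different idea.
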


\section{DG Algebras and DG Krull Dimension} \label{sec111025c}

We begin this section with a summary of terminology from~\cite{avramov:dgha,felix:rht}.

\begin{notation}
Given an $R$-complex $X$, write $|x|=i$ when $x\in X_i$.
\end{notation}

\begin{definition}
\label{DGK}
A \emph{positively graded commutative differential graded $R$-algebra} (\emph{DG $R$-algebra} for short)
is an $R$-complex $A$ equipped with a
binary operation $(a,b)\mapsto ab$ 
satisfying the following properties:
\begin{description}
\item[associative] for all $a,b,c\in A$ we have $(ab)c=a(bc)$;
\item[distributive] for all $a,b,c\in A$ such that 
$|a|=|b|$ we have $(a+b)c=ac+bc$ and $c(a+b)=ca+cb)$;
\item[unital] there is an element $1\in A_0$ such that for all $a\in A$ we have $1a=a$;
\item[graded commutative] for all $a,b\in A$ we have 
$ba = (-1)^{|a||b|}ab\in A_{|a|+|b|}$, and $a^2=0$ when
$|a|$ is odd; 
\item[positively graded] $A_i=0$ for $i<0$; and
\item[Leibniz Rule] 
for all $a,b\in A$ we have 
$\partial^A_{|a|+|b|}(ab)=\partial^A_{|a|}(a)b+(-1)^{|a|}a\partial^A_{|b|}(b)$.
\end{description}
Given a DG $R$-algebra $A$, the \emph{underlying algebra} is the
graded commutative  $R$-algebra
$\und{A}=\oplus_{i=0}^\infty A_i$.

We say that $A$ is \emph{noetherian}
if $\HH_0(A)$ is noetherian and the $\HH_0(A)$-module $\HH_i(A)$ is  finitely generated for all $i\geq 0$.
We say that $A$ is  \emph{local} if it is noetherian, $R$ is local, and
the ring $\HH_0(A)$ is a local $R$-algebra\footnote{This means that
$\HH_0(A)$ is a local  ring whose maximal ideal contains the ideal $\fm\HH_0(A)$ where $\fm$ is the maximal 
ideal of $R$.}.
\end{definition}

\begin{example}
Given a sequence $\x=x_1,\ldots,x_n\in R$, the Koszul complex $K^R(\x)$ is a noetherian DG $R$-algebra
under the wedge product;  it is generated over $K_0=R$ by $K_1$. If $(R,\fm)$ is local and $\x\in\fm$, then $K^R(\x)$ is a local DG $R$-algebra.
\end{example}

\begin{definition}
Let $A$ be a DG $R$-algebra. A \emph{differential graded module over}~$A$
(\emph{DG $A$-module} for short) is an $R$-complex $M$ equipped with a
binary operation $(a,m)\mapsto am$ 
satisfying the following properties:
\begin{description}
\item[associative] for all $a,b\in A$ and $m\in M$ we have $(ab)m=a(bm)$;
\item[distributive] for all $a,b\in A$ and $m,n\in M$ such that 
$|a|=|b|$ and $|m|=|n|$, we have $(a+b)m=am+bm$ and $a(m+n)=am+an)$;
\item[unital] for all $m\in M$ we have $1m=m$;
\item[graded] for all $a\in A$ and $m\in M$ we have 
$am\in M_{|a|+|m|}$; 
\item[Leibniz Rule] 
for all $a\in A$ and $m\in M$ we have 
$\partial^A_{|a|+|m|}(am)=\partial^A_{|a|}(a)m+(-1)^{|a|}a\partial^M_{|m|}(m)$.
\end{description}
The \emph{underlying $\und{A}$-module} associated to $M$ is the
$\und{A}$-module
$\und{M}=\oplus_{i=-\infty}^\infty M_i$.

A \emph{DG submodule} of a DG $A$-module $M$ is a subcomplex that is a DG $A$-module under the 
operations induced from $M$. A \emph{DG ideal} of $A$ is a DG submodule of $A$.
\end{definition}

\begin{definition}\label{defn111025b}
Let $A$ be a DG $R$-algebra. A DG ideal $I\subseteq A$ is \emph{prime} if
$\und I$ is a prime ideal of $\und A$.
Let $\dgspec$ denote the set of DG prime ideals of $A$.
The \emph{DG Krull dimension} of $A$, denoted $\dgdim$,
is the supremum of lengths of chains of DG prime ideals of $A$.
For each ideal $I\subseteq \HH_0(A)$, 
write $I=\tilde I/\im(\partial^A_1)$ where $\tilde I$ is an ideal of $A_0$ containing $\im(\partial^A_1)$,
and set 
$$\dgcontr IA\cdots\xra{\partial^2_A}A_1\xra{\partial^1_A}\tilde I\to 0.$$
\end{definition}

\begin{remark}\label{rmk111025a}
Let $A$ be a  DG $R$-algebra.
The following facts are straightforward to verify.
For each DG (prime) ideal $J\subseteq A$, the subset $J_0\subseteq A_0$ is a
(prime) ideal
containing $\partial^A_1(J_1)$.
For each  ideal $I\subseteq \HH_0(A)$, the subset $\dgcontr IA\subseteq A$ is a DG 
ideal of $A$. 
An ideal $I\subseteq \HH_0(A)$ is prime if and only if $\dgcontr IA\subseteq A$ is  DG prime.
The operation $I\mapsto \dgcontr IA$, 
considered as a map from the set of (prime) ideals of $\HH_0(A)$ to the set of DG
(prime) ideals of $A$,
is injective and
respects containments.
In particular, one has $\dgdim\geq\dim(\HH_0(A))$.
\end{remark}

\begin{proposition}\label{prop111025a}
Let $A$ be a  DG $R$-algebra.
If $A$ is generated over $A_0$ in odd degrees or if $A$ is bounded, then 
the map $\dgcontr {(-)}A\colon\spec[\HH_0(A)]\to\dgspec$ is bijective, so 
$\dgdim=\dim(\HH_0(A))$.
\end{proposition}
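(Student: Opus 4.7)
The plan is to invoke Remark~\ref{rmk111025a}, which already establishes that $\dgcontr{(-)}{A}\colon\spec[\HH_0(A)]\to\dgspec$ is injective and containment-preserving. So only surjectivity remains; once this is in hand, the resulting order-preserving bijection transports chains of primes, yielding $\dgdim=\dim(\HH_0(A))$ immediately.

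Fix a DG prime ideal $J\subseteq A$. The key reduction is to show that $J_n=A_n$ for every $n\geq 1$, equivalently that $A_{\geq 1}\subseteq\und J$. Granting this, I would set $\tilde I:=J_0$, which Remark~\ref{rmk111025a} identifies as a prime ideal of $A_0$. Since $J$ is closed under the differential and $J_1=A_1$, one has $\im(\partial^A_1)=\partial^A_1(J_1)\subseteq J_0=\tilde I$, so $I:=\tilde I/\im(\partial^A_1)$ is a well-defined prime ideal of $\HH_0(A)$. A direct degree-by-degree comparison against the formula of Definition~\ref{defn111025b} then yields $\dgcontr IA=J$, giving surjectivity.

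To establish the reduction, I would show under either hypothesis that $A_{\geq 1}$ lies in the nilradical of $\und A$, and hence in the prime ideal $\und J$. In the bounded case, $A_{\geq 1}$ is itself a nilpotent ideal of $\und A$: if $A_n=0$ for $n>N$, then $(A_{\geq 1})^{N+1}\subseteq A_{\geq N+1}=0$. In the odd-generated case, each odd-degree generator $y$ satisfies $y^2=0$ by graded commutativity, so $y$ lies in every prime of $\und A$; since such generators generate $A_{\geq 1}$ as an $\und A$-ideal, $A_{\geq 1}$ lies in every prime as well.

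The only real obstacle is this nilpotence/generation step; the remainder is elementary bookkeeping with the explicit formula for $\dgcontr IA$ together with the fact, noted in Remark~\ref{rmk111025a}, that the correspondence is already known to be order-preserving in one direction, the reverse direction being automatic from the degree-zero part of the chain-level comparison.
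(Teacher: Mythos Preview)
Your proposal is correct and follows essentially the same approach as the paper: both arguments show that $A_{\geq 1}$ (the paper's $A_+$) is contained in every DG prime $J$ via nilpotency of positive-degree elements under either hypothesis, then recover $J$ as $\dgcontr{(J_0/\im(\partial^A_1))}{A}$ using closure under $\partial^A_1$. Your write-up is in fact slightly more explicit than the paper's about why the odd-degree generators generating $A_{\geq 1}$ as an ideal forces all of $A_{\geq 1}$ into $\und J$; the only cosmetic slip is the lead-in phrase ``lies in the nilradical'' in the odd-generated case, where your actual argument (correctly) shows containment in every prime rather than nilpotency per se.
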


\begin{proof}
Assume that $A$ is generated over $A_0$ in odd degrees.
Since each element $a\in A$ of odd degree is square-zero, it must be contained
in each DG prime ideal of $A$. That is, each DG prime ideal  $P\subset A$ contains
$A_+=\cdots\to A_1\to 0$.
Since $P$ must be closed under $\partial^A_1$,
it must contain 
$\dgcontr 0A=\cdots\to A_1\to \im(\partial^1_A)\to 0$.
From this it follows that $P=\dgcontr {(P_0/\im(\partial^1_A))}A$. 
Since $P$ is a DG prime of $A$, Remark~\ref{rmk111025a} implies that $P_0/\im(\partial^1_A)$ is a prime ideal
of $\HH_0(A)=A/\im(\partial^1_A))$. Thus,
the map $\dgcontr {(-)}A\colon\spec[\HH_0(A)]\to\dgspec$ is surjective, hence it is bijective
by Remark~\ref{rmk111025a}. The equality $\dgdim=\dim(\HH_0(A))$ follows immediately.

In the case where $A$ is bounded, it follows that every element $a\in A$ of non-zero degree is nilpotent,
so the above argument applies.
\end{proof}

\begin{corollary}\label{cor121024a}
Let $K=K^R(\mathbf x)$ be a Koszul complex over $R$.
Then 
the map $\dgcontr {(-)}K\colon\spec[R/(\mathbf x)]\to\dgspec[K]$ is bijective, so 
$\dgdim[K]=\dim(R/(\mathbf x))$.
\end{corollary}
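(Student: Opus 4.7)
The plan is to deduce this directly from Proposition~\ref{prop111025a} applied to $A=K$, since the Koszul complex is built to satisfy the hypothesis on odd-degree generation. First I would record the two facts needed. Namely: by the standard construction of the Koszul complex as an exterior algebra, $K=K^R(\mathbf x)$ is a DG $R$-algebra with $K_0=R$ that is generated over $K_0$ by $K_1$, and $1$ is an odd degree; and the standard computation $\HH_0(K^R(\mathbf x))\cong R/(\mathbf x)$ holds since $\partial^K_1\colon K_1\to K_0$ is the map $R^n\to R$ sending the $i$\textsuperscript{th} basis vector to $x_i$, whose cokernel is $R/(\mathbf x)$.

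Next I would invoke Proposition~\ref{prop111025a} with $A=K$: since $K$ is generated over $K_0$ in odd degrees, the proposition gives that the map
\[
\dgcontr{(-)}K\colon\spec[\HH_0(K)]\to\dgspec[K]
\]
is a bijection and that $\dgdim[K]=\dim(\HH_0(K))$. Substituting the isomorphism $\HH_0(K)\cong R/(\mathbf x)$ from the previous step then yields both the bijectivity of $\dgcontr{(-)}K\colon\spec[R/(\mathbf x)]\to\dgspec[K]$ and the dimension formula $\dgdim[K]=\dim(R/(\mathbf x))$.

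There is essentially no obstacle here; the corollary is a specialization of the proposition to a case where the hypothesis is automatic. The only point requiring any care is the identification of $\spec[\HH_0(K)]$ with $\spec[R/(\mathbf x)]$ via the canonical isomorphism, which is routine. Thus the proof is a two-line argument citing Proposition~\ref{prop111025a} together with the computation of $\HH_0$ of a Koszul complex.
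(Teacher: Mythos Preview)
Your proposal is correct and matches the paper's approach exactly: the corollary is stated immediately after Proposition~\ref{prop111025a} with no proof, since (as the paper already records in the example preceding Definition~\ref{defn111025b}) $K^R(\mathbf x)$ is generated over $K_0=R$ by $K_1$ and $\HH_0(K)\cong R/(\mathbf x)$. There is nothing to add.
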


The following example shows that the assumptions on $A$
(generated in odd degrees or bounded) are necessary in 
Proposition~\ref{prop111025a}.

\begin{example}
Let $k$ be a field, and let $A=k[X]$ denote the polynomial ring in one
indeterminate $X$ of degree 2. This is a  DG $k$-algebra, using the trivial
differential. The ideals $0$ and $A_+=(X)A$ are DG prime.
(Moreover, $\dgspec$ is precisely the set of graded prime ideals of $A$.)
In particular, we have $\dgdim=1>0=\dim(k)=\dim(\HH_0(A))$
since $\HH_0(A)=k$.
\end{example}

For our main theorem, we require some DG localization.

\begin{definition}\label{defn111025c}
Let $A$ be a DG $R$-algebra.
A subset $U\subseteq A$ is \emph{multiplicatively closed}
if it contains 1 and is closed under multiplication. 
Given a DG $A$-module $M$  (e.g., $M=A$) and a multiplicatively closed subset
$U\subseteq A$, we define an  relation
on $M\times U$ as follows: $(m,u)\sim(n,v)$ if 
$|m|-|u|=|n|-|v|$ and
there is an element $w\in U$
such that $w(un-(-1)^{|u||v|}vm)=0$.
\end{definition}

\begin{proposition}\label{prop111025b}
Let $A$ be a DG $R$-algebra.
Given a DG $A$-module $M$ (e.g., $M=A$)
and a multiplicatively closed subset
$U\subseteq A$, the relation
from Definition~\ref{defn111025c} is an equivalence relation.
\end{proposition}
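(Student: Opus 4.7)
The plan is to verify the three axioms of an equivalence relation directly, using only graded commutativity $ab=(-1)^{|a||b|}ba$, the fact that $a^2=0$ whenever $|a|$ is odd, and the fact that all elements of $U$ are homogeneous (implicit in the use of $|u|$).

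For reflexivity, given $(m,u)\in M\times U$ the degree condition is trivial and I must produce $w\in U$ with $w(um-(-1)^{|u|^2}um)=0$. When $|u|$ is even the sign is $+1$ and $w=1$ works. When $|u|$ is odd the expression becomes $2w\,um$; here I choose $w=u\in U$ and use $u(um)=u^2m=0$. For symmetry, if $w\in U$ witnesses $(m,u)\sim(n,v)$, then multiplying $w(un-(-1)^{|u||v|}vm)=0$ by $-(-1)^{|u||v|}$ produces $w(vm-(-1)^{|u||v|}un)=0$, which (since $(-1)^{|u||v|}=(-1)^{|v||u|}$) is precisely the relation $(n,v)\sim(m,u)$ witnessed by the same $w$.

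The main content is transitivity. Suppose $(m,u)\sim(n,v)$ via $w\in U$ and $(n,v)\sim(p,t)$ via $w'\in U$, and abbreviate
$$x=un-(-1)^{|u||v|}vm,\qquad y=vp-(-1)^{|v||t|}tn,$$
so that $wx=0=w'y$. I will show that $w'':=ww'v\in U$ witnesses $(m,u)\sim(p,t)$; since $v\in U$ and $U$ is multiplicatively closed, $w''\in U$, and the degree condition $|m|-|u|=|p|-|t|$ follows by transitivity of equality.

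The key step is the sign identity
$$v\bigl(up-(-1)^{|u||t|}tm\bigr) \;=\; (-1)^{|u||v|}uy \;+\; (-1)^{|u||v|+|v||t|+|u||t|}tx,$$
which I obtain by expanding the left side, rewriting $vup=(-1)^{|u||v|}u(vp)=(-1)^{|u||v|}u(y+(-1)^{|v||t|}tn)$, then replacing $utn$ with $(-1)^{|u||t|}t(un)=(-1)^{|u||t|}t(x+(-1)^{|u||v|}vm)$, and finally checking that the two resulting copies of $vtm$ cancel with matching signs. Tracking these six sign contributions is the main obstacle; everything else is routine. Once the identity is in hand, I multiply through by $ww'$ and commute $w$ past $w't$ and $w'$ past $u$ using graded commutativity to obtain $ww'tx = \pm\,w't(wx)=0$ and $ww'uy = \pm\,wu(w'y)=0$, whence $ww'v\bigl(up-(-1)^{|u||t|}tm\bigr)=0$, as required.
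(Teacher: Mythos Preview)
Your proof is correct and follows the same approach as the paper. The paper only spells out the reflexivity argument---with exactly your case split on the parity of $|u|$ and the same choice $w=u$ in the odd case---and dismisses symmetry and transitivity as ``tedious but straightforward to verify''; your argument supplies those omitted details, and your choice of witness $w''=ww'v$ for transitivity together with the sign identity you derive is precisely the kind of computation the paper is alluding to.
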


\begin{proof}
Symmetry and transitivity are
tedious but straightforward to verify.
There is a tiny subtlety with reflexivity. To check that 
$(m,u)\sim(m,u)$, we need to consider two cases.
The case where $|u|$ is even is straightforward.
For the case where $|u|$ is odd, it follows that $u^2=0$,
so we have
$u(um-(-1)^{|u||u|}um)=0$.
\end{proof}

\begin{definition}\label{defn111025d}
Let $A$ be a DG $R$-algebra, and let
$U\subseteq A$ be multiplicatively closed.
Let $M$ be a DG $A$-module  (e.g., $M=A$).
For each $(m,u)\in M\times U$, let $m/u$ and $\frac{m}{u}$ denote the equivalence class of $(m,u)$
under the equivalence relation $\sim$
from Definition~\ref{defn111025c}. 

We define the 
\emph{DG localization} $U^{-1}M$ using the quotient rule:
\begin{align*}
(U^{-1}M)_i&:=
\{m/u\mid i=|m|-|u|\} \\
\partial^{U^{-1}M}\left(\frac{m}{u}\right)
&:=\frac{u\partial^M(m)-\partial^A(u)m}{u^2}
\\
\frac{m}{u}+\frac{m'}{u'}
&:=\frac{um'+(-1)^{|u||u'|}u'm}{uu'}\\
\frac{a}{u}\frac{m}{v}
&:=\frac{am}{uv}
\end{align*}
\end{definition}

\begin{proposition}\label{prop111025c}
Let $A$ be a DG $R$-algebra, and let
$U\subseteq A$ be multiplicatively closed.
Let $M$ be a DG $A$-module  (e.g., $M=A$).
\begin{enumerate}[\rm(a)]
\item\label{prop111025c1}
Using the above definition, $U^{-1}A$ is a DG $R$-algebra, not necessarily positively graded, and $U^{-1}M$ is a DG $U^{-1}A$-module.
\item\label{prop111025c2}
If $U\subseteq A_0$, then $U^{-1}A$ is positively graded.
\end{enumerate}
\end{proposition}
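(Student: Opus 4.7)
The plan is to follow the template of the classical construction of a localization of a (graded) commutative ring, with the added bookkeeping required by the differential and by the graded-commutativity signs. The three tasks are (i) well-definedness of the four operations on equivalence classes, (ii) verification of the DG $R$-algebra axioms and the DG module axioms, and (iii) the positivity statement in part~\eqref{prop111025c2}.

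First I would verify well-definedness of each operation. For addition and multiplication the computation is essentially the standard one, but one must track the $(-1)^{|u||u'|}$ factors carefully: if $(m,u)\sim(m_1,u_1)$ with witness $w\in U$ satisfying $w(u m_1-(-1)^{|u||u_1|}u_1 m)=0$, then one checks that $\frac{m}{u}+\frac{m'}{u'}=\frac{m_1}{u_1}+\frac{m'}{u'}$ by producing a witness of the form $wuu_1$ (up to sign) for the sum. The multiplication is similarly routine. For the differential the computation is more delicate: one must check that the right-hand side $\frac{u\partial^M(m)-\partial^A(u)m}{u^2}$ does not depend on the representative $(m,u)$, and one must also check that it makes sense when $|u|$ is odd (so $u^2=0$). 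The subtlety in the odd case is resolved by first rewriting $\frac{n}{u^2}$ using the relation imposed by $u^2=0$, which forces the differential to be defined via an equivalent expression multiplied by a suitable element of $U$; alternatively, one checks directly using the Leibniz rule in $A$ that $u\partial^M(m)-\partial^A(u)m$ is annihilated by appropriate elements of $U$.

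Next I would verify the axioms. Associativity, distributivity, unitality, and graded commutativity for $U^{-1}A$, together with the module axioms for $U^{-1}M$, reduce by clearing denominators to the corresponding identities in $A$ and $M$. The Leibniz rule for $U^{-1}M$ over $U^{-1}A$ is the main computation: starting from
\[
\partial\!\left(\frac{a}{u}\cdot\frac{m}{v}\right)=\partial\!\left(\frac{am}{uv}\right)=\frac{uv\,\partial^M(am)-\partial^A(uv)\,am}{(uv)^2},
\]
one expands using the Leibniz rules in $A$ and $M$, the sign $a\partial^M(m)=(-1)^{?}\cdots$, and graded commutativity, and reassembles to obtain
\[
\frac{\partial^A(a)u-\partial^A(u)a}{u^2}\cdot\frac{m}{v}+(-1)^{|a|-|u|}\frac{a}{u}\cdot\frac{\partial^M(m)v-\partial^M(v)m}{v^2}.
\]
Finally, one checks that $\partial^2=0$ on $U^{-1}M$ by the same clearing-denominators argument, using $(\partial^A)^2=0$ and $(\partial^M)^2=0$.

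For part~\eqref{prop111025c2}, the hypothesis $U\subseteq A_0$ gives $|u|=0$ for every $u\in U$, so every element $a/u\in U^{-1}A$ satisfies $|a/u|=|a|-0=|a|\geq 0$ by the positivity of $A$; hence $(U^{-1}A)_i=0$ for $i<0$, as required.

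The main obstacle will be the sign management in showing well-definedness of the differential (especially when $|u|$ is odd, where $u^2=0$ forces one to interpret the formula via an equivalent representative) and in verifying the Leibniz rule in $U^{-1}M$; once these two calculations are done carefully, the remaining verifications are formal.
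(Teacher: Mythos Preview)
Your overall strategy matches the paper's, but you are missing the key simplifying observation that drives both the well-definedness of $\partial$ and the verification of $\partial^2=0$. If $U$ contains an element $u$ of odd degree, then $u^2=0$ lies in $U$ (since $U$ is multiplicatively closed), so $0\in U$ and hence $U^{-1}M=0$; every axiom is then vacuous. The paper makes this reduction at the outset and thereafter assumes every $u\in U$ has even degree, which is what makes the sign bookkeeping and the quotient-rule formula tractable. Your attempt to ``rewrite $\frac{n}{u^2}$ using the relation imposed by $u^2=0$'' is groping toward this but never states it; without it you have no clean interpretation of the formula $\partial(m/u)=(u\,\partial^M m-\partial^A u\cdot m)/u^2$ when $|u|$ is odd.

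There is a second gap in your sketch of $\partial^2=0$: it does \emph{not} follow merely from $(\partial^A)^2=0$ and $(\partial^M)^2=0$ by clearing denominators. When one carries out the computation (with $|u|$ even, by the reduction above), the residual term is
\[
\frac{2u\,\partial^A(u)\,\partial^A(u)\,m}{u^4},
\]
and this vanishes only because $|\partial^A(u)|$ is odd, forcing $\partial^A(u)^2=0$ by the strict graded-commutativity axiom $a^2=0$ for $|a|$ odd. This is precisely where the DG-algebra hypothesis enters, and your proposal does not identify it.
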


\begin{proof}
Note that if $U$ contains an element $u$ of odd degree, then everything is trivial:
the fact that $u$ has odd degree implies that $u^2=0$, so for all $m/v\in U^{-1}M$ we have 
$m/v=(u^2m)/(u^2v)=0$.
Thus, for the remainder of this proof, we assume that $U$ does not contain any elements of odd degree.
It is straightforward to show that the addition and multiplication rules for $U^{-1}A$ and $U^{-1}M$ are well-defined
and satisfy the standard axioms (associative, etc.).

We  show that the differential $\partial^{U^{-1}M}$ is well-defined. (The special case $M=A$ then follows.)
To this end, let $m/u=n/v$ in $U^{-1}M$. Since $|u|$ and $|v|$ are even, it follows that there is an element
$w\in U$ such that $w(vm-un)=0$. 
Applying $\partial^M$ to this equation, we have the first equality in the next display:
\begin{align*}
0
&=\partial^M(w(vm-un))\\
&=\partial^A(w)(vm-un)+w\partial^M(vm-un)\\
&=\partial^A(w)(vm-un)+w\partial^A(v)m+vw\partial^M(m)-w\partial^A(u)n-uw\partial^M(n).
\end{align*}
The second and third equalities follow from the Leibniz rule, since $|u|$, $|v|$, and $|w|$ are even.
The fact that $w(vm-un)=0$ implies that $w\partial^A(w)(vm-un)$.
Thus, if we multiply the above display by $uvw$, we have the first equality in the next display:
\begin{align*}
0
&=uvw^2\partial^A(v)m+uv^2w^2\partial^M(m)-uvw^2\partial^A(u)n-u^2vw^2\partial^M(n)\\
&=uw\partial^A(v)(wvm)+uv^2w^2\partial^M(m)-vw\partial^A(u)(wun)-u^2vw^2\partial^M(n)\\
&=uw\partial^A(v)(wun)+uv^2w^2\partial^M(m)-vw\partial^A(u)(wvm)-u^2vw^2\partial^M(n)\\
&=w^2(u^2\partial^A(v)n+uv^2\partial^M(m)-v^2\partial^A(u)m-u^2v\partial^M(n)).
\end{align*}
The second and fourth equalities follow from the fact that $|u|$, $|v|$, and $|w|$ are even.
The third equality follows from the condition $w(vm-un)=0$.
This explains the second equality in the next display
\begin{align*}
\frac{u\partial^M(m)-\partial^A(u)m}{u^2}
&=\frac{uv^2\partial^M(m)-v^2\partial^A(u)m}{u^2v^2}\\
&=\frac{u^2v\partial^M(n)-u^2\partial^A(v)n}{u^2v^2}\\
&=\frac{v\partial^M(n)-\partial^A(v)n}{v^2}
\end{align*}
so we conclude that $\partial^{U^{-1}M}$ is well-defined.

Next, we show that $\partial^{U^{-1}M}\partial^{U^{-1}M}=0$.  (The special case $M=A$ then follows.)
\begin{align*}
\partial^{U^{-1}M}\left(\partial^{U^{-1}M}\left(\frac{m}{u}\right)\right)\\
&\hspace{-1.5cm}=\partial^{U^{-1}M}\left(\frac{u\partial^M(m)-\partial^A(u)m}{u^2}\right)\\
&\hspace{-1.5cm}=\frac{u^2\partial^M(u\partial^M(m)-\partial^A(u)m)-\partial^A(u^2)(u\partial^M(m)-\partial^A(u)m)}{u^4}\\
&\hspace{-1.5cm}=\frac{u^2(\partial^A(u)\partial^M(m)+u\partial^M(\partial^M(m))-\partial^A(\partial^A(u))m+\partial^A(u)\partial^M(m))}{u^4}\\
&\hspace{-1cm}-\frac{2u\partial^A(u)(u\partial^M(m)-\partial^A(u)m)}{u^4} \\
&\hspace{-1.5cm}=\frac{u^2(2\partial^A(u)\partial^M(m))-2u\partial^A(u)(u\partial^M(m)-\partial^A(u)m)}{u^4}\\
&\hspace{-1.5cm}=\frac{2u\partial^A(u)\partial^A(u)m}{u^4}\\
&\hspace{-1.5cm}=0
\end{align*}
The first two steps are by definition.
The third step follows from the Leibniz rule for $M$,
and the fourth step uses the fact that $\partial^M\partial^M=0=\partial^A\partial^A$.
The fifth step is straightforward cancellation.
For the sixth step, note that the fact that $|u|$ is even implies that $|\partial^A(u)|$ is odd, so the element
$\partial^A(u)\in A$ is square-zero.

The Leibniz rule for $U^{-1}M$ (and hence for $U^{-1}A$) is straightforward.
\end{proof}

\section{Dimension and  Systems of Parameters} \label{sec121025a}

Before proving Theorem~\ref{thm121026a}, we require a few more preliminaries.

\begin{lemma}\label{lem121025a}
Let $X$ be a homologically bounded below $R$-complex, and let $\fm\subset R$ be a maximal ideal. 
If $\supp X=\{\fm\}$, e.g., if $X\not\simeq 0$ and each $\HH_i(X)$ has finite length over $R$, then $\fm\in\anc X$.
\end{lemma}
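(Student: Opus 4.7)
The plan is to unpack Foxby's formula for the Krull dimension of a complex and apply it directly to the localization $X_\fm$, read off against the definition of anchor prime. Specifically, I would start by noting that by the hypothesis $\supp X = \{\fm\}$, the only prime that survives localization at $\fm$ is $\fm$ itself, so $\supp_{R_\fm}(X_\fm) = \{\fm R_\fm\}$. Combined with the fact that $\fm$ is maximal, so $\dim(R_\fm/\fm R_\fm) = 0$, the supremum defining $\dim_{R_\fm}(X_\fm)$ collapses to a single term:
\[
\dim_{R_\fm}(X_\fm) = \dim(R_\fm/\fm R_\fm) - \inf((X_\fm)_{\fm R_\fm}) = -\inf(X_\fm).
\]
This is exactly the condition $\fm \in \anc X$. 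The assumption that $X$ is homologically bounded below guarantees $\inf(X_\fm) > -\infty$, so this number is a well-defined integer (and in particular, the expression $\dim_{R_\fm}(X_\fm) = -\inf(X_\fm)$ is meaningful rather than a vacuous $\infty = \infty$).

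For the parenthetical example, I would argue: if $X \not\simeq 0$ then $\supp X \neq \emptyset$, and if each $\HH_i(X)$ has finite length over $R$, then $\supp \HH_i(X) \subseteq V(\fm) = \{\fm\}$ for every $i$, so $\supp X = \bigcup_i \supp \HH_i(X) \subseteq \{\fm\}$. Combining, $\supp X = \{\fm\}$, and the main case applies.

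There is really no obstacle here; the entire argument is an immediate unwinding of the two definitions once one observes that localization kills every prime other than $\fm$. The only mild subtlety is being careful that $-\inf(X_\fm)$ is genuinely finite, which is precisely what the ``homologically bounded below'' hypothesis is there to ensure.
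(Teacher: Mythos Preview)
Your proposal is correct and follows essentially the same approach as the paper: both unwind Foxby's dimension formula over the single-point support $\{\fm\}$ to read off the anchor condition directly. Your version is arguably a hair more precise, since you compute $\dim_{R_\fm}(X_\fm)$ (which is literally what the anchor condition asks for) rather than $\dim_R(X)$ as the paper does, and you spell out the ``e.g.'' clause, which the paper leaves implicit.
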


\begin{proof}
By definition, we have
\begin{align*}
\dim_R(X)
&=\sup\{\dim(R/\fp)-\inf(X_{\fp})\mid\fp\in\supp X\}
\\
&=\dim(R/\fm)-\inf(X_{\fm})\\
&=-\inf(X_{\fm})
\end{align*}
as desired.
\end{proof}

Here is an example showing that the converse of the previous result fails.

\begin{example}\label{ex121025a}
Let $k$ be a field and set $R=k[\![T]\!]$ with $\fm=TR$.
Consider the following complex, which is concentrated in degrees 0 and 1:
$$X\quad =0\to R\xrightarrow 0 k\to 0.$$
Since $\HH_1(X)\cong R$, we have $\supp X=\spec$.
And we compute:
\begin{align*}
\dim_R(X)
&=\sup\{\dim(R/\fp)-\inf(X_{\fp})\mid\fp\in\supp X\}
\\
&=\sup\{\dim(R/\fm)-\inf(X_{\fm}),\dim(R/(0))-\inf(X_{(0)})\} \\
&=\sup\{-\inf(X),1-\inf(X_{(0)})\} \\
&=\sup\{0,1-1\} \\
&=0\\
&=-\inf(X).
\end{align*}
So, we have $\fm\in\anc X$ and $\supp X\neq\{\fm\}$.
\end{example}

\begin{lemma}\label{lem121025b}
Assume that $(R,\fm)$ is local, and fix  a homologically finite $R$-complex $X$. 
Each length system of parameters $\x$ for $X$ satisfies $\fm\in\anc{\Lotimes{K^R(\x)}{X}}$.
In particular, we have $\ldim X\geq\dim_R(X)$.
\end{lemma}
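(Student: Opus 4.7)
The plan is to reduce both claims to Lemma~\ref{lem121025a} together with a general dimension-drop estimate for Koszul tensor products.

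For the first claim, set $Y:=\Lotimes{K^R(\x)}{X}$. The length sequence condition gives $\supp Y\subseteq\{\fm\}$, so it suffices to show $Y\not\simeq 0$ (assuming $X\not\simeq 0$; otherwise the claim is vacuous). I would verify this by identifying the bottom homology of $Y$: using a semifree resolution $F\xrightarrow{\simeq}X$ with $F_i=0$ for $i<\inf(X)$, direct inspection of the double complex $K^R(\x)\otimes_R F$ shows $Y_{\inf(X)}=F_{\inf(X)}$, $Y_{\inf(X)-1}=0$, and the boundary image from degree $\inf(X)+1$ is exactly $\partial^F(F_{\inf(X)+1})+(\x)F_{\inf(X)}$, yielding
\[
\HH_{\inf(X)}(Y)\cong \HH_{\inf(X)}(X)/(\x)\HH_{\inf(X)}(X),
\]
which is nonzero by Nakayama since $\x\subseteq\fm$. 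Hence $\supp Y=\{\fm\}$ and Lemma~\ref{lem121025a} gives $\fm\in\anc Y$.

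For the second claim, the key tool is the estimate
\[
\dim_R(\Lotimes{K^R(\mathbf{y})}{X})\geq\dim_R(X)-d
\]
for any sequence $\mathbf{y}=y_1,\ldots,y_d\in\fm$. I would prove this by induction on $d$, the content lying in the case $d=1$. Pick $\fp\in\supp X$ realizing $\dim_R(X)=\dim(R/\fp)-\inf(X_\fp)$. When $y\in\fp$, the mapping-cone presentation of $\Lotimes{K^R(y)}{X}$ together with the long exact sequence of multiplication by $y$ on $X$ gives $\fp\in\supp(\Lotimes{K^R(y)}{X})$ with $\inf((\Lotimes{K^R(y)}{X})_\fp)=\inf(X_\fp)$, so $\fp$ contributes exactly $\dim_R(X)$. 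When $y\notin\fp$, choose a minimal prime $\fq\supseteq\fp+(y)$; Krull's principal ideal theorem gives $\dim(R/\fq)\geq\dim(R/\fp)-1$, while $\inf(X_\fq)\leq\inf(X_\fp)$ since $X_\fp$ is a further localization of $X_\fq$, reducing to the case $y\in\fq$.

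The main obstacle is precisely this dimension-drop estimate; the remainder is routine. To conclude, given a length system of parameters $\x$ of length $m=\ldim X+\inf(X)$, the first part gives $\supp Y=\{\fm\}$, so $\dim_R(Y)=-\inf(Y)$. Since $\inf(Y)\geq\inf(X)$ (as $K^R(\x)$ is concentrated in nonnegative degrees), the estimate then yields $\dim_R(X)-m\leq\dim_R(Y)\leq -\inf(X)$, i.e., $m\geq\dim_R(X)+\inf(X)$, and subtracting $\inf(X)$ gives $\ldim X\geq\dim_R(X)$.
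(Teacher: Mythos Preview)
Your proof is correct, and the first claim is handled exactly as in the paper: the length-sequence hypothesis forces $\supp{Y}\subseteq\{\fm\}$, and Lemma~\ref{lem121025a} finishes. Your verification that $Y\not\simeq 0$ via $\HH_{\inf(X)}(Y)\cong\HH_{\inf(X)}(X)/(\x)\HH_{\inf(X)}(X)$ and Nakayama is a detail the paper leaves implicit.

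For the inequality $\ldim X\geq\dim_R(X)$ you take a genuinely different route. The paper simply observes that once $\fm\in\anc{K^R(\x)\otimes_R X}$ is established, Christensen's definition of a system of parameters (together with the theory in \cite{christensen:sc2}) forces the length $m$ of $\x$ to satisfy $m\geq\dim_R(X)+\inf(X)$; the paper compresses this into the phrase ``by definition of $\dim_R(X)$.'' You instead supply a self-contained proof of the dimension-drop estimate $\dim_R(K^R(\mathbf y)\otimes_R X)\geq\dim_R(X)-d$ and deduce the inequality directly. Your approach buys independence from \cite{christensen:sc2}; the paper's buys brevity.

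One small imprecision: in the case $y\notin\fp$, invoking Krull's principal ideal theorem gives $\height(\fq/\fp)\leq 1$, but in a non-catenary local ring that does not directly yield $\dim(R/\fq)\geq\dim(R/\fp)-1$ for \emph{every} minimal prime $\fq$ over $\fp+(y)$. The clean fix is to use the standard fact $\dim(R/(\fp+(y)))\geq\dim(R/\fp)-1$ and choose $\fq$ minimal over $\fp+(y)$ with $\dim(R/\fq)=\dim(R/(\fp+(y)))$; then your argument goes through verbatim.
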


\begin{proof}
Let $\x=x_1,\ldots,x_m\in\fm$ be a length system of parameters for $X$. By definition of $\dim_R(X)$,
it suffices to show that $\fm\in\anc{\Lotimes{K^R(\x)}{X}}$.
Since $\x$ is a length system of parameters for $X$, we know that
each $\HH_i(\Lotimes{K^R(\x)}{X})$ has finite length, so we have $\fm\in\anc{\Lotimes{K^R(\x)}{X}}$
by Lemma~\ref{lem121025a}.
\end{proof}

Example~\ref{ex121025a} shows that equality can fail in the previous result, as we see next.

\begin{example}\label{ex121025b}
Let $k$ be a field and set $R=k[\![T]\!]$ with $\fm=TR$.
Consider the following complex, which is concentrated in degrees 0 and 1:
$$X\quad =\quad0\to R\xrightarrow 0 k\to 0.$$
We have already seen that $\dim_R(X)=0$. Since $\HH_1(X)\cong R$ does not have finite length, we have
$\ldim X>0=\dim_R(X)$.
(More specifically, it is straightforward to show that $\ldim X=1$.)
\end{example}

Theorem~\ref{thm121026a} from the introduction follows from the next result; see~\ref{proof121026a}.

\begin{proposition}\label{thm121025a}
Let $A$ be a homologically finite positively graded commutative local noetherian DG $A_0$-algebra such that $(A_0,\fm_0)$ is local noetherian.
\begin{enumerate}[\rm(a)]
\item \label{thm121025a1}
Given a system of parameters $\x\in\fm_0$ for $\HH_0(A)$, each $\HH_i(\Lotimes[A_0]{K^{A_0}(\x)}{A})$ has finite length over $A_0$.
In particular, we have $\dim(\HH_0(A))\geq\ldim[A_0]A$.
\item \label{thm121025a4}
Given a system of parameters $\x\in\fm_0$ for $A$, the ring $\HH_0(A)/(\x)\HH_0(A)$ is artinian.
In particular, we have $\dim_{A_0}(A)\geq\dim(\HH_0(A))$.
\end{enumerate}
\end{proposition}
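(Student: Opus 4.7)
For part~(a), the plan is to analyze $Y:=K^{A_0}(\x)\otimes_{A_0}A$---which represents the derived tensor product because $K^{A_0}(\x)$ is a bounded complex of free $A_0$-modules---via the standard double-complex spectral sequence obtained by filtering by Koszul degree. This filtration is bounded, so the spectral sequence converges strongly with
$$E^2_{p,q}=\HH_p\bigl(K^{A_0}(\x)\otimes_{A_0}\HH_q(A)\bigr)$$
abutting to $\HH_{p+q}(Y)$. Homological finiteness of $A$ over $A_0$ ensures that only finitely many $\HH_q(A)$ are nonzero, each is finitely generated over $A_0$, and $\HH_0(A)$ itself is finitely generated as an $A_0$-module. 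Each $E^2_{p,q}$ is then a finitely generated $\HH_0(A)$-module annihilated by $\x$, hence a finitely generated module over $\HH_0(A)/(\x)\HH_0(A)$, which is artinian local because $\x$ is a system of parameters for $\HH_0(A)$. A short Nakayama-style argument---using that $\HH_0(A)$ is finite over $A_0$ to see that a power of $\fm_0$ annihilates $\HH_0(A)/(\x)\HH_0(A)$---shows this quotient has finite $A_0$-length, and hence so does each $E^2_{p,q}$ and each abutment $\HH_i(Y)$. This exhibits $\x$ as a length sequence for $A$; since $\inf(A)=0$, we obtain $\ldim[A_0]A\le|\x|-\inf(A)=\dim(\HH_0(A))$.

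For part~(b), the plan is to extract from the anchor-prime hypothesis a support constraint on $\HH_0(A)/(\x)\HH_0(A)$. Setting $Y:=K^{A_0}(\x)\otimes_{A_0}A$ again, I would first observe $\inf(Y)=0$: the complex is concentrated in nonnegative degrees (both $K^{A_0}(\x)$ and $A$ are), and $\HH_0(Y)=\HH_0(A)/(\x)\HH_0(A)$ is nonzero since $\x\in\fm_0$ lies inside the maximal ideal of the local ring $\HH_0(A)$. The anchor condition $\fm_0\in\anc Y$ then reads $\dim_{A_0}(Y)=0$, which by Foxby's definition forces $\dim(A_0/\fp)\le\inf(Y_\fp)$ for every $\fp\in\supp Y$. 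Applied to primes $\fp\in\supp\HH_0(Y)$---for which $\inf(Y_\fp)\le 0$---this forces $\fp=\fm_0$. Hence the support of $\HH_0(A)/(\x)\HH_0(A)$ over $A_0$ is $\{\fm_0\}$; since this module is finitely generated over $A_0$, it has finite $A_0$-length, and being noetherian as a ring it is therefore artinian. The image of $\x$ in $\HH_0(A)$ then generates an $\fm$-primary ideal of the local ring $\HH_0(A)$, so $|\x|\ge\dim(\HH_0(A))$, yielding $\dim_{A_0}(A)=|\x|-\inf(A)\ge\dim(\HH_0(A))$.

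The main obstacle I anticipate is bridging the two notions of finiteness at play: the Koszul/quotient arguments naturally produce modules that are artinian over $\HH_0(A)/(\x)\HH_0(A)$, whereas the definition of length sequence calls for finite length over $A_0$. The bridge is homological finiteness of $A$ over $A_0$, which forces $\HH_0(A)$ to be finitely generated as an $A_0$-module and thereby aligns $A_0$-length with $\HH_0(A)$-length. Beyond this---and the support manipulation in part~(b)---the remaining pieces (convergence of the spectral sequence, which is automatic since the Koszul filtration is bounded; finite generation of Koszul homology over $\HH_0(A)/(\x)$; and the concluding numerical inequalities) are routine.
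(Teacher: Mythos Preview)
Your argument is correct in both parts, but the route differs from the paper's in interesting ways.

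For part~(a), the paper does not use a spectral sequence. Instead it observes directly that $K^{A_0}(\x)\otimes_{A_0}A$ is itself a homologically finite local noetherian DG $A_0$-algebra; hence each $\HH_i(K^{A_0}(\x)\otimes_{A_0}A)$ is a finitely generated module over $\HH_0(K^{A_0}(\x)\otimes_{A_0}A)\cong\HH_0(A)/(\x)\HH_0(A)$, which is artinian. This is a one-line invocation of the DG algebra structure, whereas your spectral-sequence argument recovers essentially the same module structure on the $E^2$-page by hand. Both are valid; the paper's version is shorter, but yours has the virtue of making explicit the passage from $\HH_0(A)$-length to $A_0$-length via finiteness of $\HH_0(A)$ over $A_0$, a point the paper leaves implicit.

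For part~(b), your argument is actually more economical than the paper's. The paper proves the stronger claim $\supp[A_0]{Y}=\supp[A_0]{\HH_0(Y)}$ (with $\inf(Y_{\fp_0})=0$ on this set), and for the nontrivial containment it localizes $Y$ at a prime $\fp_0$ and uses the DG algebra structure on the localization (Proposition~\ref{prop111025c}) to argue that if $\HH_0$ vanishes then all $\HH_i$ vanish. You sidestep this entirely by working only with primes in $\supp[A_0]{\HH_0(Y)}$, for which $\inf(Y_{\fp})=0$ is automatic from positive grading; this already suffices to force $\fp=\fm_0$ and conclude. So your version of~(b) avoids any appeal to DG localization.
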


\begin{proof}
\eqref{thm121025a1}
Let $\x\in\fm_0$ be a system of parameters  for $\HH_0(A)$. It follows that
$\Lotimes[A_0]{K^{A_0}(\x)}{A}$ is a homologically finite local noetherian DG $A_0$-algebra such that $(A_0,\fm_0)$ is local noetherian.
Furthermore, the ring 
$$\HH_0(K^{A_0}(\x)\otimes_{A_0}A)\cong \HH_0(A)/(\x)\HH_0(A)$$ 
is local and artinian.
Since each $\HH_i(K^{A_0}(\x)\otimes_{A_0}A)$ is finitely generated over $\HH_0(K^{A_0}(\x)\otimes_{A_0}A)$, it follows that each
$\HH_i(K^{A_0}(\x)\otimes_{A_0}A)$ has finite length.

\eqref{thm121025a4}
Let $\x\in\fm_0$ be a system of parameters  for $A$. 
By definition, this implies that $\fm_0\in\anc[A_0]{\Lotimes[A_0]{K^{A_0}(\x)}{A}}$.
This explains the second equality in the next display:
\begin{align*}
0
&=-\inf(\Lotimes[A_0]{K^{A_0}(\x)}{A}) \\
&= \dim_{A_0}(\Lotimes[A_0]{K^{A_0}(\x)}{A}) \\
&=\sup\{\dim(A_0/\fp_0)-\inf((\Lotimes[A_0]{K^{A_0}(\x)}{A})_{\fp_0}) \mid\fp_0\in\supp[A_0]{\Lotimes[A_0]{K^{A_0}(\x)}{A}}\}
\end{align*}
The first equality is by 
the isomorphism $\HH_0(\Lotimes[A_0]{K^{A_0}(\x)}{A}) \cong \HH_0(A)/(\x)\HH_0(A)$ and Nakayama's Lemma, and the third one is by definition.

Claim: We have 
$$\supp[A_0]{\Lotimes[A_0]{K^{A_0}(\x)}{A}}=\supp[A_0]{\HH_0(\Lotimes[A_0]{K^{A_0}(\x)}{A})}=\supp[A_0]{\HH_0(A)/(\x)\HH_0(A)}$$
and 
for each $\fp_0\in\supp[A_0]{\Lotimes[A_0]{K^{A_0}(\x)}{A}}$, we have $\inf((\Lotimes[A_0]{K^{A_0}(\x)}{A})_{\fp_0})=0$.
The equality $\supp[A_0]{\HH_0(\Lotimes[A_0]{K^{A_0}(\x)}{A})}=\supp[A_0]{\HH_0(A)/(\x)\HH_0(A)}$
follows from the isomorphism $\HH_0(\Lotimes[A_0]{K^{A_0}(\x)}{A})\cong \HH_0(A)/(\x)\HH_0(A)$.
And the containment $\supp[A_0]{\Lotimes[A_0]{K^{A_0}(\x)}{A}}\supseteq\supp[A_0]{\HH_0(\Lotimes[A_0]{K^{A_0}(\x)}{A})}$
is a consequence of the defi-nition $\supp[A_0]{\Lotimes[A_0]{K^{A_0}(\x)}{A}}=\cup_{i}\supp[A_0]{\HH_i(\Lotimes[A_0]{K^{A_0}(\x)}{A})}$.
Now, fix a prime $\fp_0\in\supp[A_0]{\Lotimes[A_0]{K^{A_0}(\x)}{A}}$, and suppose that
$\fp_0\notin\supp[A_0]{\HH_0(\Lotimes[A_0]{K^{A_0}(\x)}{A})}$.
It follows that we have
$$0\not\simeq(\Lotimes[A_0]{K^{A_0}(\x)}{A})_{\fp_0}\simeq \Lotimes[(A_0)_{\fp_0}]{K^{(A_0)_{\fp_0}}(\x)}{A_{\fp_0}}.$$
Note that $\Lotimes[(A_0)_{\fp_0}]{K^{(A_0)_{\fp_0}}(\x)}{A_{\fp_0}}$ is a positively graded
DG $(A_0)_{\fp_0}$-algebra by Proposition~\ref{prop111025c}.
So each homology module $\HH_i(\Lotimes[(A_0)_{\fp_0}]{K^{(A_0)_{\fp_0}}(\x)}{A_{\fp_0}})$ is a
module over $\HH_0(\Lotimes[(A_0)_{\fp_0}]{K^{(A_0)_{\fp_0}}(\x)}{A_{\fp_0}})$.
The condition $\fp_0\notin\supp[A_0]{\HH_0(\Lotimes[A_0]{K^{A_0}(\x)}{A})}$ implies
that $\HH_0(\Lotimes[(A_0)_{\fp_0}]{K^{(A_0)_{\fp_0}}(\x)}{A_{\fp_0}})=0$,
so each module over this ring is 0. Hence, for all $i$ we have 
$\HH_i(\Lotimes[(A_0)_{\fp_0}]{K^{(A_0)_{\fp_0}}(\x)}{A_{\fp_0}})=0$, contradicting the non-triviality condition 
$\Lotimes[(A_0)_{\fp_0}]{K^{(A_0)_{\fp_0}}(\x)}{A_{\fp_0}}\not\simeq 0$.
The claim now follows.

Combining the claim with the previous paragraph, we have
$$0=\sup\{\dim(A_0/\fp_0) \mid\fp_0\in\supp[A_0]{\HH_0(A)/(\x)\HH_0(A)}.$$
Thus, the only prime in $\supp[A_0]{\HH_0(A)/(\x)\HH_0(A)}$ is $\fm_0$.
Since $\HH_0(A)/(\x)\HH_0(A)$ is noetherian, it follows that $\HH_0(A)/(\x)\HH_0(A)$ is artinian, as desired.
\end{proof}

\begin{para}[Proof of Theorem~\ref{thm121026a}] \label{proof121026a}
Parts~\eqref{thm121026a1} and~\eqref{thm121026a2}
follow from Proposition~\ref{thm121025a}
and Lemma~\ref{lem121025b}.
And part~\eqref{thm121026a3}
is from Proposition~\ref{prop111025a}.
\qed
\end{para}

\section*{Acknowledgments}
We are grateful to the referee for his/her careful reading of  this paper.

\providecommand{\bysame}{\leavevmode\hbox to3em{\hrulefill}\thinspace}
\providecommand{\MR}{\relax\ifhmode\unskip\space\fi MR }
\providecommand{\MRhref}[2]{%
  \href{http://www.ams.org/mathscinet-getitem?mr=#1}{#2}
}
\providecommand{\href}[2]{#2}

\end{document}